\theoremstyle{plain}
\newtheorem{theorem}{Theorem}[section]
\newtheorem{proposition}[theorem]{Proposition}
\newtheorem{corollary}[theorem]{Corollary}
\theoremstyle{definition}
\newtheorem{definition}[theorem]{Definition}
\newtheorem{example}[theorem]{Example}
\newtheorem{remark}[theorem]{Remark}
\newcommand{\R}{\mathbb{R}}
\newcommand{\N}{\mathbb{N}}
\newcommand{\cM}{\mathcal{M}}
\newcommand{\cC}{\mathcal{C}}
\newcommand{\sB}{\mathscr{B}}
\newcommand{\da}{\delta}
\newcommand{\e}{\varepsilon}
\numberwithin{equation}{section}
\newcommand{\la}{\lambda}
\newcommand{\de}{\textnormal{ d}}
\newcommand{\den}{\textnormal{d}}
\newcommand{\I}{\mathds{1}}
\newcommand{\curvebrac}[1]{\left( #1 \right)}
\newcommand{\modd}[1]{\left| #1 \right|}
\newcommand{\norm}[1]{\left\lVert#1\right\rVert}
\newcommand{\BV}{\textnormal{BV}}
\newcommand{\BVloc}{\textnormal{BV}_{\textnormal{loc}}}
\DeclareMathOperator*{\vague}{v-lim}
\definecolor{vargreen}{RGB}{0,150,0}
\begin{document}

\title{A continuity theorem for generalised signed measures with an application to Karamata's Tauberian theorem}
	
	\author{Martin Herdegen, Gechun Liang, Osian Shelley\thanks{All authors: University of Warwick, Department of Statistics, Coventry, CV4 7AL, UK; \{m.herdegen, g.liang, o.d.shelley\}@warwick.ac.uk}}
	\date{\today}
	
	\maketitle

\begin{abstract}
The Laplace transforms of positive measures on $\R_{+}$ converge if and only if their distribution functions converge at continuity points of the limiting measure. We extend this classical continuity theorem to the case of generalised signed Radon measures. The result for the signed case requires some additional conditions, which follow from recent results on vague convergence of signed Radon measures. As an application, we introduce a novel Tauberian condition for generalised signed Radon measures that extends Karamata's Tauberian theorem.\\
    
\end{abstract}


\bigskip
\noindent\textbf{Keywords:} Continuity theorem,\ Tauberian theorem,\ signed measures,\ vague convergence.

\section{Introduction}\label{section:introduction}
It is well known that Laplace transforms of positive measures on $\R_{+} := [0, \infty)$ converge if and only if their distribution functions converge at continuity points of the limiting measure; see e.g.~\cite[XIII.1, Theorem 2a]{feller_introduction_1971}. In this paper we extend this so-called continuity theorem to the case of generalised signed Radon measures. Our proof relies on recent results linking the vague convergence of signed measures to the convergence of their distribution functions; see \cite{herdegen_vague_2022}. The latter results explain the need for some extra conditions in the continuity theorem for the signed case (that are automatically satisfied in the positive case). We illustrate the use of this new continuity theorem to deduce a novel Tauberian condition that allows for an extension of Karamata's Tauberian theorem; see e.g. \cite[XIII.5, Theorem 1]{feller_introduction_1971}.
\medskip{}

The remainder of the paper is organised as follows.
In Section \ref{subsection:notation_and_defnitinions}, we introduce some key pieces of notation. In
\Cref{section:continuity}, we state and prove the \textit{continuity theorem} (\cref{thm:Continuity_Thm_Signed}) for generalised signed Radon measures. In \cref{section:Proof}, we state and prove an extension of Karmata's Tauberian theorem (\cref{thm:Karamata_Tauberian}). The \cref{section:appendix} contains some auxiliary results.

\subsection{Preliminary definitions and notation}\label{subsection:notation_and_defnitinions}
Recall that a positive measure $\mu$ on $({\R_{+}}, \sB({{\R_{+}}}))$ is called a \emph{Radon measure} if it is \emph{locally finite} and \emph{inner regular}, i.e., 
  \begin{enumerate}[(i)]
      \item for any $x \in {\R_{+}}$, there exists an open neighbourhood $U$ of $x$ (in $\R_+)$ such that $\mu(U) < \infty$;
      \item for each $A \in \sB({\R_{+}})$
      \begin{equation*}
        {\mu}(A) = \sup\{{\mu}(K): K\in \sB({\R_{+}}), ~K \textnormal{ compact},~ K \subset A\}.
      \end{equation*}
  \end{enumerate}

\begin{definition}\label{def:local_signed} 
A mapping $\mu:\sB({\R_{+}}) \to \R \cup \{\infty\}$ is called a \emph{generalised signed Radon measure} if 
there exist mutually singular Radon measures $\mu^+$ and $\mu^-$ such that for any $A\in \sB({\R_{+}})$,
    \begin{equation*}
        \mu(A) := \begin{cases}
            \mu^+(A)  - \mu^-(A) &\textnormal{if both } \mu^+(A),\mu^-(A)  < \infty,\\
            +\infty & \textnormal{otherwise. }
        \end{cases}
    \end{equation*}
We set $|\mu| := \mu^+ + \mu^-$ and $\norm{\mu} := |\mu|(\R_+)$. We say that $\mu$ is a \emph{finite signed Radon measure} if $\norm{\mu} < \infty$. We denote the space of generalised signed Radon measures on $({\R_{+}}, \sB({{\R_{+}}}))$ by $\cM^{*}$ and the set of all finite signed Radon measures on $({\R_{+}}, \sB({{\R_{+}}}))$ by $\cM$.
    \end{definition}

\begin{remark} 
(a) A generalised signed Radon measure need not be a signed measure in the classical sense as we allow both $\mu^+$ and $\mu^-$ to have infinite mass. By contrast, a signed Radon measure is a finite signed measure in the classical sense, and $|\mu|$ is the total variation measure.

(b) It is easy to check that a mapping $\mu:\sB({\R_{+}}) \to \R \cup \{\infty\}$ is in $\cM^{*}$ if and only if the restrictions $\mu|_{[0, T]}:\sB([0, T]) \to \R \cup \{\infty\}$ are in $\cM$ for all $T \geq 0$.
\end{remark}

For an interval $I \subset \R$, we denote by $\BV(I)$ the space of all functions $f: I \to \R$ that are of of bounded variation on $I$. Moreover, we denote by $\BVloc(I)$, the space of all  $f: I \to \R$ that are locally of bounded variation, i.e., $f|_{[a, b]} \in \BV([a,b])$ for any $[a, b] \subset I$. 

\medskip{}

For $\mu \in \cM^*$, we define its \emph{distribution function} $F_{\mu} \in \BVloc({\R_{+}})$ by 
\begin{equation*}
    F_{\mu}(x) := \begin{cases}
        0\hspace*{1.55cm} \text{for } x = 0,\\
        \mu([0,x]) \quad \text{for }x>0.
    \end{cases}
\end{equation*}
Note that $F_{\mu}$ is right continuous and satisfies
\begin{equation*}
    F_{\mu}(b) - F_{\mu}(a) = \mu((a,b]), \quad \text{for all } a < b \in \R.
\end{equation*}

Throughout the remainder of this paper, we will be considering measures in $\cM^*$ whose Laplace transform is well defined on all of $(0,\infty)$. Thus, we set
\begin{equation*}
    \cM_\Psi :=\left\{ \mu \in \cM^* : \int_{\R_{+}} e^{-\la x}\modd{\mu}(\den x) < \infty \text{ for all } \la > 0\right\}.
\end{equation*}
For $\mu \in \cM_\Psi$, we define its \emph{Laplace transform} $\Psi_\mu \in \BVloc((0,\infty))$ by

\begin{equation*}
\Psi_\mu(\la) := \int_{\R_{+}}e^{-\la x}\mu(\den x), \quad \lambda > 0.
\end{equation*}


\section{Continuity Theorem}\label{section:continuity}
In this section, we state and prove the continuity theorem for generalised signed Radon measures. This extends the classical continuity theorem from Feller \cite[XIII.1, Theorem 2a]{feller_introduction_1971}.

A key tool tool for our proof is the concept of vague convergence. To this end, we need to introduce some further pieces of notation: Let $C({\R_{+}})$ be the space of all continuous $\R$-valued functions on ${\R_{+}}$, $C_0({\R_{+}})$ the subspace of all $f \in C({\R_{+}})$ such that for any $\e>0$, there exists a compact set $K_\e\in \sB({\R_{+}})$ with $\modd{f} < \e$ on $K_\e^c$, and $C_c({\R_{+}})$ the subspace of all $f \in C({\R_{+}})$ such that  $f$ has compact support. Clearly, we have the inclusions $C_c({\R_{+}}) \subseteq C_0({\R_{+}})\subseteq C({\R_{+}})$.

\begin{definition}\label{def:vague}
    For any $\mu \in \cM$, define the map $I_\mu:C_b({\R_{+}}) \to \R$ by
\begin{equation*}
   I_{\mu}(f)  =  \int_{\R_{+}} f \de \mu.
\end{equation*}
Then, we say that a sequence $\{\mu_n\} \subset \cM$ converges \emph{vaguely} to $\mu$, if $I_{\mu_n}(f) \to I_{\mu}(f)$ for all $f \in C_c({\R_{+}})$, which we denote by $$\vague_{n \to \infty}\mu_n = \mu;$$
\end{definition}

\begin{remark}
Note that sometimes $C_0({\R_{+}})$ rather than $C_c({\R_{+}})$ is used as the space of test functions for vague convergence in the literature. However, if $\sup_{n \in \N}\norm{\mu_n} < \infty$, the two definitions are equivalent; see {\cite[Proposition 1.3 and Remark 1.4]{herdegen_vague_2022}} for more details.
\end{remark}

The following definition is based on {\cite[Definition 3.6]{herdegen_vague_2022}}. It provides a sufficient condition to ensure that distribution functions of signed measures converge at continuity points; see \cite[Theorem 3.9]{herdegen_vague_2022} for details.
\begin{definition}\label{def:no_mass}
    We say that $\{\mu_n\} \subset \cM^{*}$ is \emph{right-equicontinuous} at a point $x \in \R_{+}$, if for any $\e > 0$ there exists $h > 0$ such that for all $\delta < h$
   \begin{equation*}
       \limsup_{n \to \infty}| {\mu_n}((x,x+\delta]) | \leq \e.
   \end{equation*}
\end{definition}

We can now state and prove the continuity theorem for signed measures.
\begin{theorem}[Continuity Theorem]\label{thm:Continuity_Thm_Signed}
   Let $\{\mu_n\} \cup \{\mu\} \subset \cM_{\Psi}$ be such for any $\la > 0$,
   \begin{align}
       \limsup_{n \to \infty} \Psi_{\modd{\mu_n}}(\la) &< \infty.\label{eq:Continuity_Thm_Signed_bounded_condition}
   \end{align}
   \begin{enumerate}[\normalfont(a)]
       \item Suppose $\Psi_{\mu_n}(\la) \to \Psi_\mu(\la)$ for all $\la > 0$. If $\{\mu_n\}$ is right-equicontinuous  at a continuity point $x \in \R_{+}$ of $\mu$,  then $F_{\mu_n}(x) \to F_\mu(x)$.

       \item If $F_{\mu_n} \to F_\mu$ a.e., then $\Psi_{\mu_n} \to \Psi_{\mu}$.
   \end{enumerate}
\end{theorem}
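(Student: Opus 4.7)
For part (a), my plan is to reduce to a problem about uniformly bounded finite signed Radon measures by tilting with an exponential, and then identify vague subsequential limits via Laplace-transform uniqueness. Concretely, I fix $\la_0 > 0$ and introduce the tilted measures $\tilde{\mu}_n(\den x) := e^{-\la_0 x}\mu_n(\den x)$ and $\tilde{\mu}(\den x) := e^{-\la_0 x}\mu(\den x)$. These lie in $\cM$ with $\norm{\tilde{\mu}_n} = \Psi_{|\mu_n|}(\la_0)$, so hypothesis \eqref{eq:Continuity_Thm_Signed_bounded_condition} makes $\{\tilde{\mu}_n\}$ uniformly bounded in total variation, and $\Psi_{\tilde{\mu}_n}(\la) = \Psi_{\mu_n}(\la + \la_0) \to \Psi_{\tilde{\mu}}(\la)$ for every $\la > 0$. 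By the sequential Banach--Alaoglu theorem applied to the unit ball of $\cM \cong C_0(\R_+)^*$, any subsequence of $\{\tilde{\mu}_n\}$ has a further subsequence converging weakly-$*$ to some $\tilde{\nu} \in \cM$; testing against $e^{-\la x} \in C_0(\R_+)$ then yields $\Psi_{\tilde{\nu}}(\la) = \Psi_{\tilde{\mu}}(\la)$ for every $\la > 0$, which by uniqueness of Laplace transforms on $\cM$ forces $\tilde{\nu} = \tilde{\mu}$. Hence $\tilde{\mu}_n \to \tilde{\mu}$ weakly-$*$, and a fortiori vaguely in the sense of \cref{def:vague}, since $C_c(\R_+) \subset C_0(\R_+)$. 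Transferring back, for any $f \in C_c(\R_+)$ the function $x \mapsto e^{\la_0 x}f(x)$ again has compact support, so
\begin{equation*}
\int f \de\mu_n = \int e^{\la_0 x}f(x)\,\tilde{\mu}_n(\den x) \to \int e^{\la_0 x}f(x)\,\tilde{\mu}(\den x) = \int f \de\mu,
\end{equation*}
i.e.\ $\vague_{n\to\infty}\mu_n = \mu$. Invoking \cite[Theorem~3.9]{herdegen_vague_2022} with the right-equicontinuity and continuity hypotheses at $x$ then delivers $F_{\mu_n}(x) \to F_\mu(x)$. The hard part will be the subsequence-extraction-and-identification step, which hinges on Laplace-transform uniqueness on $\cM$; right-equicontinuity only enters at the very last step.

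For (b), Fubini's theorem, legitimised by $\Psi_{|\mu_n|}(\la) < \infty$, yields
\begin{equation*}
\Psi_{\mu_n}(\la) = \la \int_0^\infty e^{-\la x}F_{\mu_n}(x)\,\den x,
\end{equation*}
and likewise for $\mu$. The elementary bound $|F_{\mu_n}(x)| \le |\mu_n|([0,x]) \le e^{\la x/2}\Psi_{|\mu_n|}(\la/2)$, combined with \eqref{eq:Continuity_Thm_Signed_bounded_condition}, produces a uniform domination $e^{-\la x}|F_{\mu_n}(x)| \le C e^{-\la x/2}$ on $\R_+$ valid for all sufficiently large $n$. Dominated convergence applied to the a.e.\ convergence $F_{\mu_n} \to F_\mu$ then delivers $\Psi_{\mu_n}(\la) \to \Psi_\mu(\la)$ for every $\la > 0$.
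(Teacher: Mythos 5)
Your part (a) follows essentially the same route as the paper: tilt by an exponential to land in $\cM$ with uniformly bounded total variation, extract weak-$*$ convergent subsequences via sequential Banach--Alaoglu (this tacitly uses the separability of $C_0(\R_{+})$ and the identification $\cM \cong (C_0(\R_{+}))^*$, both of which the paper makes explicit), identify every subsequential limit through Laplace-transform uniqueness (\cref{thm:signedCharacterisation}), transfer vague convergence back to $\{\mu_n\}$ by absorbing $e^{\la_0 x}$ into the compactly supported test function, and conclude with the result of \cite{herdegen_vague_2022} that vague convergence plus right-equicontinuity at a continuity point gives convergence of the distribution functions there. The only cosmetic differences are that the paper performs the tilting for every $\e>0$ rather than a single $\la_0$, and that it cites a ``simple generalisation'' of the relevant theorem of \cite{herdegen_vague_2022}; you should likewise flag that the cited statement is formulated for uniformly bounded families in $\cM$, so the same routine extension to $\cM^{*}$ is required.

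Part (b) is where you genuinely diverge, and your argument is correct. The paper first proves $F_{\mu_n^{(\e)}}\to F_{\mu^{(\e)}}$ a.e.\ by an integration by parts on $[0,t]$, feeds this into the vague-convergence criterion of \cite{herdegen_vague_2022} to obtain $\vague_{n\to\infty}\mu_n^{(\e)}=\mu^{(\e)}$, and only then recovers convergence of the Laplace transforms by testing against $e^{-\la\,\cdot}\in C_0(\R_{+})$. You instead write $\Psi_{\mu_n}(\la)=\la\int_0^\infty e^{-\la x}F_{\mu_n}(x)\,\den x$ by Fubini (justified since $|\mu_n|([0,x])\le e^{\la x/2}\Psi_{|\mu_n|}(\la/2)$ makes the iterated integral absolutely convergent), dominate $e^{-\la x}|F_{\mu_n}(x)|\le e^{-\la x/2}\Psi_{|\mu_n|}(\la/2)\le C e^{-\la x/2}$ for all large $n$ using \eqref{eq:Continuity_Thm_Signed_bounded_condition}, and apply dominated convergence directly to the a.e.\ limit $F_{\mu_n}\to F_\mu$. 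This is shorter and self-contained: it bypasses the external vague-convergence machinery entirely for this direction, at the price of not producing the intermediate vague limit statement, which the paper does not use elsewhere anyway.
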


\begin{proof}
For $\nu \in \cM^*$ and $\e > 0$, define $\nu^{(\e)} \in \cM$ by $\nu^{(\e)}(\den x) := {e^{-\e x}} \nu(\den x)$. Then for each $\e > 0$, \eqref{eq:Continuity_Thm_Signed_bounded_condition} and the fact that $\Psi_{|\mu_n|}(\e) < \infty$ for each $n \in \N$ give
    \begin{equation}
\label{eq:pf:thm:Continuity_Thm_Signed:bound}
        \sup_{n \in \N}\norm{\mu_n^{(\e)}} =   \sup_{n\in \N}\Psi_{\modd{\mu_{n}}}(\e)<\infty.
    \end{equation}
Moreover, if in addition $\vague_{n \to \infty}\mu_n^{(\e)} = \nu$ for some $\nu \in \cM$, then for each $\lambda > 0$, \eqref{eq:pf:thm:Continuity_Thm_Signed:bound}, \cite[Proposition 1.3]{herdegen_vague_2022}  and the fact that $\exp(-\lambda \cdot) \in C_0(\R_{+})$ give
    \begin{equation}
\label{eq:pf:thm:Continuity_Thm_Signed:Psi cond}
\lim_{n \to \infty} \Psi_{\mu_n^{(\e)}}(\la) = \lim_{n \to \infty} \int_0^\infty e^{-\lambda t} \mu_n^{(\e)}(\mathrm{d} t) = \int_0^\infty e^{-\lambda t} \nu(\mathrm{d} t) = \Psi_{\nu}(\lambda)
    \end{equation}

    \normalfont(a) By a simple generalisation of \cite[Theorem 3.8]{herdegen_vague_2022}, it suffices to show that  
    \begin{equation*}
\vague_{n \to \infty}\mu_{n} =\mu.
\end{equation*}
To establish the latter, it suffices to show that for each $\e > 0$, 
    \begin{equation}
\label{eq:pf:thm:Continuity_Thm_Signed:vlim:02}
\vague_{n \to \infty}\mu^{(\e)}_{n} =\mu^{(\e)},
\end{equation}
Indeed, fix $\e > 0$ and $f \in C_c({\R_{+}})$. Then the vague convergence of $\{ \mu^{(\e)}_n \}$ and the fact that $f \exp(-\e \cdot) \in C_c(\R_{+})$ gives
    \begin{align*}
        \int_{{\R_{+}}}f \de \mu_n =\int_{{\R_{+}}} f(x)e^{\e x} \mu^{(\e)}_n(\den x)
        \rightarrow \int_{{\R_{+}}} f(x)e^{\e x} \mu^{(\e)}(\den x) = \int_{{\R_{+}}}f \de \mu.
    \end{align*}

To establish \eqref{eq:pf:thm:Continuity_Thm_Signed:vlim:02}, fix $\e> 0$. By the subsequence criterion, it suffices to show that for every subsequence $\{n_k\}$, there exists a further subsequence $\{n_{k_\ell}\}$ such that
    \begin{equation}
\label{eq:pf:thm:Continuity_Thm_Signed:vlim:03}
\vague_{\ell \to \infty}\mu^{(\e)}_{n_{k_\ell}} = \mu^{(\e)}.
\end{equation}
So let $\{n_k\}$ be a subsequence. Then \eqref{eq:Continuity_Thm_Signed_bounded_condition} gives $\sup_{k \in \N}\norm{\mu^{(\e)}_{n_k}} =   \sup_{k \in \N}\Psi_{\modd{\mu_{n_k}}}(\e)<\infty$. Thus, by {\cite[Theorem 1.2]{herdegen_vague_2022}} we can view $\{\mu^{(\e)}_{n_k}\}$ as a bounded family in $(C_0({\R_{+}}))^*$. Hence, the Banach-Alaoglu theorem implies that $\{\mu^{(\e)}_{n_k}\}$ is $\sigma((C_0({\R_{+}}))^*,C_0({\R_{+}}))$-compact. Furthermore, as a consequence of \Cref{thm:stoneWeierstrass}, $C_0({\R_{+}})$ is separable, whence the sequence is sequentially compact. Thus, there exists a subsequence $\{n_{k_\ell}\}$ and $\tilde \mu \in \cM$ such that 
\begin{equation}
\vague_{\ell \to \infty}\mu^{(\e)}_{n_{k_\ell}} =\tilde \mu \label{eq:pf:thm:Continuity_Thm_Signed:tilde mu}
\end{equation}

We proceed to show that $\tilde \mu = \mu^{(\e)}$. Since by \cref{thm:signedCharacterisation} each element in $\cM_{\Psi}$ is uniquely characterised by its Laplace transform, it suffices to show that $\Psi_{\tilde \mu}(\lambda) =\Psi_{\mu^{(\e)}}(\lambda)$ for each $\lambda > 0$. So fix $\lambda > 0$. Then the hypothesis together with  \eqref{eq:pf:thm:Continuity_Thm_Signed:Psi cond} (applied to the subsequence $\{n_{k_\ell}\}$) give
    \begin{equation*}
      \Psi_{\mu^{(\e)}}(\la) = \Psi_{\mu}(\la+\e)= \lim_{\ell \to \infty}  \Psi_{\mu_{n_{k_\ell}}}(\la + \e)  =
\lim_{\ell \to \infty}  \Psi_{\mu^{(\e)}_{n_{k_\ell}}}(\la)   = \Psi_{\tilde \mu}(\la).
    \end{equation*}

    {\normalfont(b)} It suffices to show that for each $\e > 0$,  
\begin{equation}
\label{eq:pf:thm:Continuity_Thm_Signed:F cond}
F_{\mu^{(\e)}_n} \to F_{\mu^{(\e)}} \;\text{a.e.}
\end{equation}
Indeed, \cite[Theorem 3.8 (a)]{herdegen_vague_2022} then gives $\textnormal{v-lim}_{n \to \infty}\mu^{(\e)}_n = \mu^{(\e)}$, and this in turn together with \eqref{eq:pf:thm:Continuity_Thm_Signed:Psi cond} yields for $\lambda > 0$,
    \begin{equation*}
       \lim_{n \to \infty} \Psi_{\mu_n}(\la + \e) =  \lim_{n \to \infty} \Psi_{\mu^{(\e)}_n}(\la)  = \Psi_{\mu^{(\e)}}(\la ) = \Psi_{\mu}(\la + \e).
    \end{equation*}
    Since $\e$ is arbitrary, the claim follows.

To establish \eqref{eq:pf:thm:Continuity_Thm_Signed:F cond}, fix $\e > 0$ and let $t > 0$ be such that $\lim_{n \to \infty} F_{\mu_n}(t) = F_\mu(t)$.
An integration by parts gives
    \begin{align*}
        F_{\mu^{(\e)}_n}(t) = \int_{[0, t]} e^{-\e x} \mu (\den x)
        = e^{-\e t}F_{\mu_n}(t) + \e\int_0^t e^{-\e x} F_{\mu_n}(x)\de x.
    \end{align*}
Using that
\begin{equation*}
\sup_{x \in [0, t]} \sup_{n \in \N} F_{\modd{\mu_n}}(x) \leq e^{t} \sup_{n \in \N}\Psi_{\modd{\mu_n}}(1) < \infty
\end{equation*}
by \eqref{eq:Continuity_Thm_Signed_bounded_condition} for $\lambda = 1$, the hypothesis, dominated convergence and an integration by parts give
    \begin{equation*}
    \lim_{n \to \infty}  F_{\mu^{(\e)}_n}(t) = e^{-\e t}F_{\mu}(t) + \e\int_0^t e^{-\e x} F_{\mu}(x)\de x = F_{\mu^{(\e)}}(t). \qedhere
    \end{equation*}

\end{proof}

\begin{remark}\label{remark:continuity_theorem}
    \begin{enumerate}[(i)]
        \item One can also prove part (a) by using Helly's selection theorem \cite[Theorem 2.35]{leoni_first_2017} to find a convergent subsequence of the family $\{F_{\mu_n^{(\e)}}\}$. The limiting function will be of bounded variation, and right continuous due to the right-equicontinuity condition. Thus, it will be the distribution function of a measure. The remainder of the proof is identical. 
        
        \item When the measures are positive, condition \eqref{eq:Continuity_Thm_Signed_bounded_condition} is clearly satisfied under the hypothesis of part (a). Moreover, the right-equicontinuity condition of part (a) is no longer needed due to \cite[Theorem 3.2]{herdegen_vague_2022}.
        
        \item Under the assumption $\Psi_{\mu_n}(\la) \to \Psi_\mu(\la)$ for all $\la > 0$, (\ref{eq:Continuity_Thm_Signed_bounded_condition}) is trivially satisfied when the measures are positive. Otherwise, a sufficient condition for \eqref{eq:Continuity_Thm_Signed_bounded_condition} is that there exists $\da \in [0,1)$ such that either $\Psi_{\mu_n^-}(\la) < \da \Psi_{\mu_n^+}(\la)$ for each $\lambda > 0$ or $\Psi_{\mu_n^+}(\la) < \da \Psi_{\mu_n^-}(\la)$ for each $\lambda > 0$. We only establish the first case. Then
        \begin{align*}
        \limsup_{n \to \infty}\Psi_{\modd{\mu_n}}(\la) &\leq \limsup_{n \to \infty}(1+\da)\Psi_{\mu_n^+}(\la) \leq \curvebrac{\frac{1+\da}{1-\da}}\limsup_{n \to \infty} \Psi_{\mu_n}(\la) \\
        &= \curvebrac{\frac{1+\da}{1-\da}} \Psi_\mu(\lambda) < \infty.
        \end{align*}
    \end{enumerate}
\end{remark}

The following example illustrates that the right-equicontinuity condition is indeed needed for Theorem \ref{thm:Continuity_Thm_Signed}(a).

\begin{example}
Let $\{\mu_n\} \cup \{\mu\} \in \cM_\Psi$ be defined by $\mu_n:= \delta_{x} - \delta_{x+\frac{1}{n}}$ for for some $x>0$, and $\mu\equiv 0$. Note that $\{\mu_n\}$ is not right-equicontinuous at $x$. Indeed, for any $\delta > 0$
\begin{equation*}
    |\mu_n((x,\delta])| = 1\quad \text{ for } \quad n \geq \frac{1}{\delta}.
\end{equation*}
It is straightforward to check that  $$\limsup_{n \to \infty} \Psi_{|\mu_n|}(\lambda)  = \limsup_{n \to \infty}  \left(e^{-\la x} +  e^{-\lambda\left(x+\frac{1}{n} \right)}\right) = 2 e^{-\la x} < \infty, \quad \lambda > 0$$ and
    \begin{equation*}
        \Psi_{\mu_n}(\lambda) = e^{-\la x} -  e^{-\lambda\left(x+\frac{1}{n}\right)} \to 0 = \Psi_\mu(\lambda), \quad \lambda > 0.
    \end{equation*}
    However, $x$ is a continuity point of $\mu$ and
    \begin{equation*}
        F_{\mu_n}(x) =  1  \not \to 0 = F_{\mu}(x). 
    \end{equation*}
\end{example}

\section{Application: Karamata's Tauberian Theorem}\label{section:Proof}

In the study of regular variation, Karamata's Tauberian Theorem for Laplace-Stieltjes transforms is a classical result; see \cite[Theorem 1]{feller_classical_1963}, \cite[Theorem 1.7.1]{bingham_regular_1989}, \cite[Theorem 1]{konig_neuer_1960}.  It relates regular variation of a positive monotone functions $F$ at infinity to the regular variation of its Laplace transforms $\Psi_{\mu_F}$ at zero. Due to the relationship between positive monotone functions and positive Radon measures, the theorem can be also stated for measures.

\begin{theorem}\label{thm:Karamata_Tauberian}
    Let $\mu \in \cM_\Psi$ be a positive measure and $\rho \geq 0$. The limit statements
   \begin{equation}\label{eq:signedPsiLimit}
            \lim_{\tau\downarrow 0} \frac{\Psi_\mu(\tau \la)}{\Psi_\mu(\tau)} = \frac{1}{\la^\rho}, \quad \lambda > 0,
            \end{equation}
and 
        \begin{equation}\label{eq:signedFLimit}
            \lim_{t \uparrow \infty} \frac{F_\mu(tx)}{F_\mu(t)}=x^\rho, \quad x > 0.
            \end{equation}
imply each other. In either case, we also have 
            \begin{equation}\label{eq:signedTheRelationship}
                \Psi_\mu(t^{-1}) \sim F_\mu(t)\Gamma(\rho + 1) \quad \text{as } t   \to \infty.
            \end{equation}
\end{theorem}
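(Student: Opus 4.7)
The plan is to deduce both implications and the asymptotic relation \eqref{eq:signedTheRelationship} from a single application of \cref{thm:Continuity_Thm_Signed} to families of rescaled positive measures, using that (by \cref{remark:continuity_theorem}(ii)) both the bound \eqref{eq:Continuity_Thm_Signed_bounded_condition} and the right-equicontinuity condition are automatic in the positive case.

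For the direction \eqref{eq:signedFLimit} $\Rightarrow$ \eqref{eq:signedPsiLimit}, I would introduce for each $t > 0$ the positive measure $\mu_t \in \cM_\Psi$ defined by $\mu_t(A) := \mu(tA) / F_\mu(t)$, so that $F_{\mu_t}(x) = F_\mu(tx)/F_\mu(t)$ and $\Psi_{\mu_t}(\lambda) = \Psi_\mu(\lambda/t)/F_\mu(t)$. Hypothesis \eqref{eq:signedFLimit} then yields $F_{\mu_t}(x) \to x^\rho$ for every $x > 0$, which is a.e.\ convergence to the distribution function of the positive Radon measure $\nu$ with density $\rho x^{\rho-1}$ on $(0,\infty)$ (and $\nu = \delta_0$ when $\rho = 0$). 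Applying \cref{thm:Continuity_Thm_Signed}(b) along any sequence $t_n \uparrow \infty$ gives
\begin{equation*}
\frac{\Psi_\mu(\lambda/t)}{F_\mu(t)} = \Psi_{\mu_t}(\lambda) \longrightarrow \Psi_\nu(\lambda) = \frac{\Gamma(\rho+1)}{\lambda^\rho}, \qquad \lambda > 0.
\end{equation*}
Setting $\lambda = 1$ produces \eqref{eq:signedTheRelationship}, and taking the ratio of the displays at $\lambda$ and $\lambda = 1$ produces \eqref{eq:signedPsiLimit}.

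For the direction \eqref{eq:signedPsiLimit} $\Rightarrow$ \eqref{eq:signedFLimit}, I would mirror the construction by setting $\mu^\tau(A) := \mu(A/\tau)/\Psi_\mu(\tau)$, so that $\Psi_{\mu^\tau}(\lambda) = \Psi_\mu(\tau\lambda)/\Psi_\mu(\tau)$ and $F_{\mu^\tau}(x) = F_\mu(x/\tau)/\Psi_\mu(\tau)$. Hypothesis \eqref{eq:signedPsiLimit} then gives $\Psi_{\mu^\tau}(\lambda) \to \lambda^{-\rho}$, which is the Laplace transform of the positive Radon measure $\tilde\nu$ with distribution function $x \mapsto x^\rho/\Gamma(\rho+1)$. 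Invoking \cref{thm:Continuity_Thm_Signed}(a) along any sequence $\tau_n \downarrow 0$ — the extra hypotheses being automatic by \cref{remark:continuity_theorem}(ii) — yields $F_{\mu^\tau}(x) \to x^\rho/\Gamma(\rho+1)$ at every continuity point $x > 0$ of $\tilde \nu$, hence for every $x > 0$. Setting $x = 1$ again gives \eqref{eq:signedTheRelationship}, and dividing recovers \eqref{eq:signedFLimit}.

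The only genuine difficulty is bookkeeping: choosing the correct rescaling so that \cref{thm:Continuity_Thm_Signed} applies directly and so that \eqref{eq:signedTheRelationship} drops out of the same limit as the target statement. Since $\mu$ is positive, none of the signed-measure subtleties from \cref{section:continuity} intervene, and the argument reduces to a clean translation of the classical Karamata proof into the framework developed above.
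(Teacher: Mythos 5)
The paper does not actually prove \cref{thm:Karamata_Tauberian}: it is quoted as a classical result with references to Feller, Bingham--Goldie--Teugels and K\"onig, and is later \emph{used} in the proof of \cref{thm:Karamata_Tauberian_Signed}(b). Your plan to derive it from \cref{thm:Continuity_Thm_Signed} is therefore a genuine addition rather than a rederivation, and your Tauberian half (\eqref{eq:signedPsiLimit} $\Rightarrow$ \eqref{eq:signedFLimit}) is correct: the rescaled measures $\mu^{\tau}$ are exactly the $\nu_n$ used in the paper's proof of the signed version, and for positive measures both \eqref{eq:Continuity_Thm_Signed_bounded_condition} and right-equicontinuity are indeed automatic under the hypothesis of part (a), as \cref{remark:continuity_theorem}(ii) records.

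The Abelian half (\eqref{eq:signedFLimit} $\Rightarrow$ \eqref{eq:signedPsiLimit}) has a gap. Condition \eqref{eq:Continuity_Thm_Signed_bounded_condition} is a standing hypothesis of \cref{thm:Continuity_Thm_Signed}, so it must also be verified before invoking part (b); \cref{remark:continuity_theorem}(ii) only says it is automatic \emph{under the hypothesis of part (a)}, and it is not implied by a.e.\ convergence of distribution functions even for positive measures. For instance, with $\mu_n = e^n \delta_n$ one has $F_{\mu_n} \to 0$ pointwise while $\Psi_{\mu_n}(\lambda) = e^{n(1-\lambda)} \to \infty$ for $\lambda < 1$, and the conclusion of part (b) fails. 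In your setting you must therefore show
\begin{equation*}
\limsup_{n \to \infty} \frac{\Psi_\mu(\lambda/t_n)}{F_\mu(t_n)} = \limsup_{n \to \infty} \lambda \int_0^\infty e^{-\lambda u}\, \frac{F_\mu(t_n u)}{F_\mu(t_n)}\, \mathrm{d}u < \infty ,
\end{equation*}
which requires a bound on $F_\mu(tu)/F_\mu(t)$ uniform in large $t$ --- obtainable from the monotonicity of $F_\mu$ together with a doubling (Potter-type) estimate such as $F_\mu(2s) \leq 2^{\rho+1} F_\mu(s)$ for $s$ large, whence $F_\mu(tu)/F_\mu(t) \leq C u^{\rho+1}$ for $u \geq 1$ and $t$ large. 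This domination step is precisely the crux of the classical Abelian argument, so it cannot be waved away; once you supply it, your proof closes. (Alternatively, for this direction one can bypass the continuity theorem entirely and argue by dominated convergence on the displayed integral, which is the classical route.)
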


According to Feller, Theorem \ref{thm:Karamata_Tauberian} has a `glorious history' \cite[Section XIII.5]{feller_introduction_1971}, even though it is often omitted from modern books on probability theory. The two implications are usually separated, where \cref{eq:signedFLimit} $\Rightarrow$ \cref{eq:signedPsiLimit} is called an \emph{Abelian} theorem, whilst \cref{eq:signedPsiLimit} $\Rightarrow$ \cref{eq:signedFLimit} is called a \emph{Tauberian }theorem. Looking to its origins we see that the Tauberian implication caused more difficulty. It was first proved via lengthy calculations in 1914 by Hardy and Littlewood in their famous paper \cite{hardy_tauberian_1914}. Karamata simplified their proof in \cite{karamata_sur_1930}, and subsequently introduced the present day class of regularly varying functions.

\subsection{Karamata's Tauberian theorem for generalised signed measures}
Karamata's theorem can be extended to functions of local bounded variation, and hence generalised signed measures. While there are some results,  see e.g. \cite[Section 4.0, 5]{bingham_regular_1989}, they do not seem to be well known. They always require some additional conditions in the Tauberian direction, usually referred to as \textit{Tauberian conditions}. The latter are needed to account for the lack of monotonicity of $F_{\mu}$ in the proof of Theorem \ref{thm:Karamata_Tauberian}. 

We proceed to apply our continuity theorem to obtain a version of Karamata's Tauberian theorem for generalised signed measures.

\begin{theorem}[Karamata's Tauberian theorem for generalised signed measures]\label{thm:Karamata_Tauberian_Signed}
    Let $\mu \in \cM_{\Psi}$ and $\rho \geq 0$. 
    \begin{enumerate}[\normalfont(a)]
        \item Suppose that
        \begin{equation}\label{eq:condition_1} 
            \liminf_{\tau \downarrow 0}\frac{ \modd{\Psi_{\mu} (\tau )}  }{\Psi_{\modd{\mu}} (\tau ) } > 0
    \end{equation}
    as well as
            \begin{equation}\label{eq:extraCondtion}
                \limsup_{h \downarrow 0 }\limsup_{\tau \downarrow 0} \modd{ \frac{F_\mu(\tau^{-1}(x+h)) - F_\mu(\tau^{-1}x) }{ \Psi_{\mu}(\tau)}}=0, \quad x > 0.
            \end{equation}
    Then \eqref{eq:signedPsiLimit} implies \eqref{eq:signedFLimit}.
        \item \eqref{eq:signedFLimit} implies \eqref{eq:signedPsiLimit}.
    \end{enumerate}
Moreover, in either case, we have the asymptotic relationship \eqref{eq:signedTheRelationship}.
\end{theorem}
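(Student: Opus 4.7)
The plan is to prove part (a) by applying the continuity theorem \Cref{thm:Continuity_Thm_Signed}(a) to a suitably rescaled family of measures, and to prove part (b) by a classical integration-by-parts and Potter's-bounds argument. The reason for splitting strategies is that \eqref{eq:signedFLimit} alone does not supply the boundedness hypothesis \eqref{eq:Continuity_Thm_Signed_bounded_condition} required by the continuity theorem, so the Abelian direction must be handled more directly.

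For part (a), I would introduce, for $\tau > 0$ small (so that \eqref{eq:signedPsiLimit} guarantees $\Psi_\mu(\tau) \ne 0$), the rescaled measure $\mu_\tau \in \cM^*$ given by $\mu_\tau(A) := \mu(\tau^{-1}A)/\Psi_\mu(\tau)$, so that $F_{\mu_\tau}(x) = F_\mu(\tau^{-1}x)/\Psi_\mu(\tau)$ and $\Psi_{\mu_\tau}(\lambda) = \Psi_\mu(\tau\lambda)/\Psi_\mu(\tau)$. By \eqref{eq:signedPsiLimit}, $\Psi_{\mu_\tau}(\lambda) \to \lambda^{-\rho}$ for each $\lambda > 0$, and this limit is the Laplace transform of the absolutely continuous limit measure $\mu_\infty$ with $F_{\mu_\infty}(x) = x^\rho/\Gamma(\rho+1)$, which is continuous on $(0,\infty)$. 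The two remaining hypotheses of \Cref{thm:Continuity_Thm_Signed}(a) then fall out of the extra assumptions: \eqref{eq:condition_1} gives $c > 0$ with $\Psi_{\modd{\mu}}(\tau) \le c\modd{\Psi_\mu(\tau)}$ for small $\tau$, and hence $\Psi_{\modd{\mu_\tau}}(\lambda) = \Psi_{\modd{\mu}}(\tau\lambda)/\modd{\Psi_\mu(\tau)} \le c\modd{\Psi_\mu(\tau\lambda)}/\modd{\Psi_\mu(\tau)} \to c\lambda^{-\rho}$, verifying \eqref{eq:Continuity_Thm_Signed_bounded_condition}; and since $\modd{\mu_\tau((x, x+\delta])} = \modd{F_\mu(\tau^{-1}(x+\delta)) - F_\mu(\tau^{-1}x)}/\modd{\Psi_\mu(\tau)}$, condition \eqref{eq:extraCondtion} is literally the required right-equicontinuity of $\{\mu_\tau\}$ at $x > 0$. \Cref{thm:Continuity_Thm_Signed}(a) then yields $F_\mu(\tau^{-1}x)/\Psi_\mu(\tau) \to x^\rho/\Gamma(\rho+1)$ for every $x > 0$; with $t = 1/\tau$, specialising to $x = 1$ gives \eqref{eq:signedTheRelationship}, and taking the ratio with the $x = 1$ case gives \eqref{eq:signedFLimit}.

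For part (b), I would proceed by a direct Abelian argument. Since $F_\mu(2t)/F_\mu(t) \to 2^\rho > 0$ by \eqref{eq:signedFLimit}, the sign of $F_\mu(t)$ must stabilise for large $t$ (otherwise the ratio would be negative along a subsequence); using the uniform-convergence theorem for regularly varying functions on compact subsets of $[1,2]$, one in fact gets $F_\mu$ of constant sign on $[T_0,\infty)$ for some $T_0$. WLOG $F_\mu > 0$ eventually, so $F_\mu$ is regularly varying of index $\rho$ at infinity in the classical (positive) sense. Using $F_{\modd{\mu}}(x) = o(e^{\e x})$ for every $\e > 0$ (a consequence of $\mu \in \cM_\Psi$) to control boundary terms, integration by parts together with the substitution $y = \tau x$, $t = 1/\tau$, gives
\[
\frac{\Psi_\mu(\tau)}{F_\mu(t)} = \int_0^\infty e^{-y}\,\frac{F_\mu(ty)}{F_\mu(t)}\de y.
\]
By \eqref{eq:signedFLimit} the integrand converges pointwise to $e^{-y}y^\rho$; Potter's bounds for regularly varying functions supply an integrable majorant on $\{y : ty \ge T_0\}$, and the complementary region $\{y : ty < T_0\}$ contributes negligibly via the crude bound $\modd{F_\mu(ty)}/\modd{F_\mu(t)} \le F_{\modd{\mu}}(T_0)/\modd{F_\mu(t)}$. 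Dominated convergence then yields \eqref{eq:signedTheRelationship}, and combining with \eqref{eq:signedFLimit} through
\[
\frac{\Psi_\mu(\tau\lambda)}{\Psi_\mu(\tau)} = \frac{\Psi_\mu(\tau\lambda)}{F_\mu(1/(\tau\lambda))}\cdot\frac{F_\mu(1/(\tau\lambda))}{F_\mu(1/\tau)}\cdot\frac{F_\mu(1/\tau)}{\Psi_\mu(\tau)} \to \Gamma(\rho+1)\cdot\lambda^{-\rho}\cdot\Gamma(\rho+1)^{-1} = \lambda^{-\rho}
\]
yields \eqref{eq:signedPsiLimit}.

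I expect the main obstacle to be the dominated-convergence step in part (b): Potter's bounds must be invoked carefully to produce a uniform majorant (the $\rho = 0$ case is particularly delicate, since there $F_\mu$ need not diverge and the small-$y$ contribution must still be shown to vanish), and the sign-stabilisation of $F_\mu$ that underlies the whole argument needs to be made rigorous.
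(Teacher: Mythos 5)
Your part (a) is essentially the paper's proof: the same rescaled measures $\mu(\tau^{-1}\cdot)/\Psi_\mu(\tau)$, the same identification of the limit as the measure with distribution function $x^\rho/\Gamma(\rho+1)$, and the same observations that \eqref{eq:condition_1} yields \eqref{eq:Continuity_Thm_Signed_bounded_condition} and that \eqref{eq:extraCondtion} is literally right-equicontinuity on $(0,\infty)$ (the paper merely runs the argument along an arbitrary null sequence $\{\tau_n\}$ so that \Cref{def:no_mass} and \Cref{thm:Continuity_Thm_Signed} apply verbatim). Part (b), however, takes a genuinely different route. You prove the Abelian direction directly, writing $\Psi_\mu(\tau)/F_\mu(1/\tau)=\int_0^\infty e^{-y}\,F_\mu(y/\tau)/F_\mu(1/\tau)\de y$ and passing to the limit by dominated convergence with Potter's bounds; this is the classical argument and it does go through, but, as you acknowledge, it obliges you to justify the sign stabilisation of $F_\mu$, construct an integrable majorant, and dispose of the small-$y$ region --- where the saving observation is that $tF_\mu(t)$ is regularly varying of index $\rho+1\ge 1$ and hence tends to infinity, so the crude bound $F_{\modd{\mu}}(T_0)\,T_0/(tF_\mu(t))$ vanishes even when $\rho=0$ and $F_\mu$ decays. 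The paper sidesteps all of this by integrating first: it sets $F_\xi(x):=\int_0^x \I_{[X,\infty)}(t)F_\mu(t)\de t$, so that $\xi$ is a genuine \emph{positive} measure with $F_\xi(x)\sim x^{\rho+1}l(x)/(\rho+1)$ by the Characterisation Theorem and Karamata's theorem on integrals of regularly varying functions, applies the classical positive-measure result (\Cref{thm:Karamata_Tauberian}) to $\xi$, and then recovers the asymptotics of $\Psi_\mu$ from the relation between $\Psi_\xi(\lambda)$ and $\frac{1}{\lambda}\int_X^\infty e^{-\lambda x}\mu(\den x)$. That route trades your dominated-convergence bookkeeping for two citations to Bingham--Goldie--Teugels and avoids Potter's bounds entirely; yours is more self-contained but the majorant construction and the sign argument are the steps you would have to write out in full. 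Both approaches are sound, and both deliver \eqref{eq:signedTheRelationship} en route exactly as required.
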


\begin{remark}\label{remark:Thm_Signed_Tauberian}
    \begin{enumerate}
        \item The limit statements \eqref{eq:signedPsiLimit} and \eqref{eq:signedFLimit}  contain the implicit assumption that $\Psi_\mu(\tau)$ and $F_\mu(t)$ are non-zero for sufficiently small $\tau$ and sufficiently large $t$, respectively. In particular \eqref{eq:signedPsiLimit} implies that $\Psi_\mu$ has one sign near the origin, and \eqref{eq:signedFLimit} implies that $F_\mu$ has one sign near infinity.
         
        \item The method of proof of Theorem \ref{thm:Karamata_Tauberian_Signed}(a) is due to König \cite{konig_neuer_1960} and Feller \cite{feller_classical_1963}. We extend the approach using Theorem \ref{thm:Continuity_Thm_Signed}. The argument for Theorem \ref{thm:Karamata_Tauberian_Signed}(b) is inspired by \cite[Theorem 1.7.6]{bingham_regular_1989}.
    \end{enumerate}
\end{remark}

\begin{proof}[Proofs of \cref{thm:Karamata_Tauberian_Signed}]
Throughout the proof, let $\{\tau_n\}\subset (0,\infty)$ be an arbitrary null sequence and define the sequence $\{t_n\} \subset (0, \infty)$ by $t_n := \tau_n^{-1}$. 

    {\normalfont(a)} 
    By \eqref{eq:condition_1}, we may assume without loss of generality that $\Psi_\mu(\tau_n) \neq 0$ for all $n \in \N$. Thus, for each $n \in \N$, we may define $\nu_n \in \cM_{\Psi}$ by
    \begin{align*}
    \nu_n(\den x):= \frac{\mu( \den (t_n x) )}{\Psi_\mu(\tau_n)}.
    \end{align*}
    Moreover, define the measure $\nu \in \cM_{\Psi}$ by
    \begin{equation*}
    \nu(\den x) := \begin{cases}
    \frac{x^{\rho-1}}{\Gamma(\rho)}\de x & \text{if  } \rho > 0,\\
    \delta_0(\den x) & \text{if  } \rho = 0.\\
    \end{cases}
    \end{equation*}
    Then \eqref{eq:signedPsiLimit} gives 
    \begin{equation*}
    \lim_{n \to \infty}  \Psi_{\nu_n}(\la) = \lim_{n \to \infty} \frac{\Psi_{\mu}(\tau_n\la)}{\Psi_\mu(\tau_n)} =  \la^{-\rho} = \Psi_\nu(\la).
    \end{equation*}
    This together with \eqref{eq:condition_1} in turn yields
    \begin{align*}
        \limsup_{n \to \infty}  \Psi_{\modd{\nu_n}}(\la)  &=  \limsup_{n \to \infty} \frac{\Psi_{\modd{\mu}}(\tau_n\la)}{ {\Psi_{{\mu}}(\tau_n) }} = \la^{-\rho}\limsup_{n \to \infty} \frac{\Psi_{\modd{\mu}}(\tau_n\la)}{ {\Psi_{{\mu}}(\tau_n\la) }}< \infty, \quad \lambda > 0.\label{eq:guarantee_bounded_Laplace}
    \end{align*}
    Moreover, $\{\nu_n\}$ is right-equicontinuous  at all points in  $(0,\infty)$. Indeed, let $x \in (0, \infty)$. Then \eqref{eq:extraCondtion} gives
    \begin{align}
        \limsup_{h \downarrow 0}\limsup_{n \to \infty}| {\nu_n}((x,x+h]) |
        &=  \limsup_{h \downarrow 0 }\limsup_{\tau \downarrow 0} \frac{\modd{ F_\mu(\tau^{-1}(x+h)) - F_\mu(\tau^{-1}x) }}{ \Psi_{\mu}(\tau)}= 0.\nonumber
    \end{align}
    It now follows from \cref{thm:Continuity_Thm_Signed}\normalfont{(a)} that $F_{\nu_n} \to F_\nu$ on $(0,\infty)$. Recalling the definition of $\nu_n$, this implies that
    \begin{equation}
    \label{eq:pf:thm:Karamata_Tauberian_Signed:key step a}
    \lim_{n \to \infty} \frac{F_{\mu}(t_n x)}{\Psi_\mu(\tau_n)} = \lim_{n \to \infty} F_{\nu_n}(x) =  F_{\nu}(x) = \frac{x^\rho}{\Gamma(\rho+1)}, \quad  x > 0.
    \end{equation}
    Finally, \eqref{eq:pf:thm:Karamata_Tauberian_Signed:key step a} for $x= 1$ gives \eqref{eq:signedTheRelationship}, and then combining  \eqref{eq:pf:thm:Karamata_Tauberian_Signed:key step a}  and \eqref{eq:signedTheRelationship} yields
    \eqref{eq:signedFLimit} via
    \begin{equation*}
    \lim_{n \to \infty}    \frac{F_{\mu}(t_n x)}{F_{\mu}(t_n)} = \lim_{n \to \infty}  \frac{F_{\mu}(t_n x)}{\Psi_{\mu}(\tau_n)}\frac{\Psi_{\mu}(\tau_n)}{F_{\mu}(t_n)} = \frac{x^\rho}{\Gamma(\rho+1)}\Gamma(\rho+1)= x^\rho, \quad x > 0.
    \end{equation*}

    {\normalfont(b)} By Remark \ref{remark:Thm_Signed_Tauberian}(i), without loss of generality we choose $X > 0$ such that $\I_{[X,\infty)}F_{\mu}$ is strictly positive. Define the positive measure $\xi \in \cM_\Psi$ via
    \begin{equation}\label{eq:pf:Continuity_Thm_Signed:F_limit} 
        F_{\xi}(x) := \int_0^x \I_{[X,\infty)}(t)F_{\mu}(t)\de t.
    \end{equation}
    By the Characterisation Theorem for regularly varying functions \cite[Theorem 1.4.1]{bingham_regular_1989}, (\ref{eq:signedFLimit}) implies there exists a slowly varying function $l$ such that $F_{\mu}(x) = x^\rho l(x)$. In particular, $\I_{[X,\infty)}(x)F_\mu(x) \sim x^\rho l(x)$, and so using \cite[Proposition 1.5.8]{bingham_regular_1989} it follows 
    \begin{equation*}
        F_{\xi}(x) \sim \frac{x^{\rho +1}l(x)}{(\rho + 1)}  \quad\text{ as }x \to \infty.   
    \end{equation*}
    
    Since $\xi$ is a positive measure, Theorem \ref{thm:Karamata_Tauberian} lets us infer that $\Psi_{\xi}(\tau ) \sim  F_{\xi}(\tau^{-1})\Gamma(\rho + 2)$ as $\tau \to 0$, whence 
    \begin{equation}\label{eq:pf:thm:Continuity_Thm_Signed:ratio b}
        \Psi_{\xi}(\tau) \sim \Gamma(\rho + 1) l(1/\tau) \tau^{-(\rho + 1)}  \quad\text{ as }\tau \to 0.     
    \end{equation}
    Noting that  $\Psi_{\xi}(\lambda) = \frac{1}{\lambda} \int_X^\infty e^{-\lambda x} \mu(\den x ) $, \eqref{eq:pf:thm:Continuity_Thm_Signed:ratio b} implies
    \begin{equation*}
        \Psi_{\mu}(\tau) \sim \Gamma(\rho + 1) l(1/\tau) \tau^{-\rho}  \quad\text{ as }\tau \to 0.  
    \end{equation*}
    Equations \eqref{eq:signedPsiLimit} and \eqref{eq:signedTheRelationship} follow immediately.
\end{proof}

We apply Theorem \ref{thm:Karamata_Tauberian_Signed} in the following example.

\begin{example}
Define $\mu \in \cM_\Psi$ via the density $f_{\mu}(x):= x\left(\frac{1}{2}+ \cos(x)\right)$. Then one can readily check that
\begin{equation*}
\Psi_{\mu}(\tau) = \frac{3\tau^4 + 1}{ 2(\tau^3 + \tau )^2 } =  \frac{1}{2\tau^2} + o(\tau^{-1}),\quad \tau \downarrow 0.
\end{equation*}
Thus, $\Psi_{\mu}$ is regularly varying with exponent $\rho = 2$. Noting that 
\begin{equation*}
    \liminf_{\tau \downarrow 0} \frac{\modd{\Psi_\mu(\tau)}}{\Psi_{\modd{\mu}}(\tau)} \geq \liminf_{\tau \downarrow 0} \frac{\frac{1}{2\tau^2}(1 + \tau o(\tau^{-1}))}{\frac{3}{2\tau^2}} = \frac{1}{3} > 0,
\end{equation*}
and
\begin{equation*}
    \limsup_{h \downarrow 0}\limsup_{\tau \downarrow 0}\modd{\frac{F_{\mu}(\tau^{-1}(x+h)) - F_{\mu}(\tau^{-1}x)}{\Psi_{\mu}(\tau)}} = \limsup_{h \downarrow 0}\limsup_{\tau \downarrow 0}\modd{\frac{ xh  +  h^2/2 + \tau^2o(\tau^{-1})}{1+ \tau^2o(\tau^{-1})}} = 0,
\end{equation*}
we see that both \eqref{eq:condition_1} and \eqref{eq:extraCondtion} are satisfied. Hence, by Theorem \ref{thm:Karamata_Tauberian_Signed} it follows that 
\begin{equation*}
    \Psi_{\mu}(\mu)(\tau) \sim \Gamma(2 + 1) F_\mu(\tau^{-1}),\quad \tau \downarrow 0.
\end{equation*}
\end{example}

\appendix
\section{Characterisation via Laplace transforms}\label{section:appendix}

In this appendix, we recall some useful facts from functional analysis and use them to prove that the measures in $\cM_{\Psi}$ are uniquely characterised by their Laplace transforms.

First, we state the Stone-Weierstrass Theorem in the version of de Branges \cite{de_branges_stone-weierstrass_1959}. To this end, recall that $\cC \subset C_0({\R_{+}})$ \emph{vanishes nowhere} on ${\R_{+}}$ if for all $x\in {\R_{+}}$, there exists $f \in \cC$ such that $f(x)\neq 0$. Moreover, $\cC$ \emph{separates points} if for each $x,y \in {\R_{+}}$ such that $x \neq y$, there exists $f \in \cC$ such that $f(x) \neq f(y)$.

\begin{theorem}[Stone-Weierstrass Theorem] \label{thm:stoneWeierstrass}
    Let $\cC$ be a subalgebra of $C_0({\R_{+}})$. Then $\cC$ is dense in $C_0({\R_{+}})$ (for the topology of uniform convergence) if and only if it separates points and vanishes nowhere.
\end{theorem}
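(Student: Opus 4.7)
The plan is to deduce the non-compact Stone-Weierstrass theorem from its classical compact version applied to the one-point compactification $\R_+^* := \R_+ \cup \{\infty\}$, a compact Hausdorff space. I would identify $C_0(\R_+)$ isometrically with the closed subspace $\{f \in C(\R_+^*) : f(\infty) = 0\}$ by extending each $f \in C_0(\R_+)$ to $\R_+^*$ via $f(\infty) := 0$; the defining property of $C_0(\R_+)$ guarantees continuity of the extension at $\infty$.

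Given a subalgebra $\cC \subset C_0(\R_+)$ that separates points and vanishes nowhere on $\R_+$, I would enlarge it to $\tilde \cC := \cC + \R \cdot \mathbf{1}_{\R_+^*} \subset C(\R_+^*)$. This is a subalgebra that now contains the constants. The key verification is that $\tilde \cC$ separates points of $\R_+^*$: for $x, y \in \R_+$ with $x \neq y$, the separation property of $\cC$ on $\R_+$ transfers directly; for $x \in \R_+$ and the added point $\infty$, the vanishing-nowhere hypothesis furnishes some $f \in \cC$ with $f(x) \neq 0$, while $f(\infty) = 0$, so $f$ itself separates them.

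With these ingredients, the classical Stone-Weierstrass theorem on the compact Hausdorff space $\R_+^*$ yields that $\tilde \cC$ is uniformly dense in $C(\R_+^*)$. To transfer this back, I would take $f \in C_0(\R_+)$, extend it to $\tilde f \in C(\R_+^*)$ as above, and choose approximants $g_n = h_n + c_n \in \tilde \cC$ with $h_n \in \cC$, $c_n \in \R$, and $g_n \to \tilde f$ uniformly on $\R_+^*$. Evaluating at $\infty$ gives $c_n = g_n(\infty) \to \tilde f(\infty) = 0$, hence $h_n = g_n - c_n \to \tilde f$ uniformly on $\R_+^*$, and in particular uniformly on $\R_+$, proving density of $\cC$ in $C_0(\R_+)$. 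The converse direction is routine: $C_0(\R_+)$ itself separates points and vanishes nowhere (via standard bump functions supported near any prescribed point), and both properties are preserved under uniform limits.

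The main obstacle is ensuring that the enlarged algebra $\tilde \cC$ genuinely separates $\R_+^*$ — this is precisely where the vanishing-nowhere hypothesis enters as the replacement for the ``contains constants'' hypothesis in the classical formulation. The deeper ingredient, namely the classical Stone-Weierstrass theorem on a compact Hausdorff space (with its lattice or polynomial approximation of $|x|$), is invoked as a black box.
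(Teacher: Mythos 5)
Your argument is correct, but note that the paper does not prove this theorem at all: it is quoted as a known result in the form due to de Branges, with a citation, and de Branges's own proof is a short duality argument (Krein--Milman applied to the extreme points of the unit ball of the annihilator of $\cC$ in $C_0(\R_+)^*$, which are shown to be supported on single points). Your route --- passing to the one-point compactification $\R_+^* = \R_+ \cup \{\infty\}$, identifying $C_0(\R_+)$ with $\{f \in C(\R_+^*) : f(\infty) = 0\}$, enlarging to the unital subalgebra $\tilde{\cC} = \cC + \R \cdot \mathbf{1}$, and observing that the vanishing-nowhere hypothesis is exactly what separates a finite point from $\infty$ --- is the other standard proof, and your descent step (evaluating the approximants at $\infty$ to see that the constant terms tend to $0$, so that the $\cC$-parts converge uniformly) is exactly right. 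One small point of phrasing in the converse: what passes to uniform (indeed pointwise) limits is the \emph{failure} to separate a fixed pair $x \neq y$, respectively the property that every element vanishes at a fixed $x$; these are closed conditions, so if $\cC$ had either defect its uniform closure would too, contradicting density because $C_0(\R_+)$ itself separates points and vanishes nowhere. Compared with the paper's black-box citation, your proof is self-contained modulo the compact Stone--Weierstrass theorem, which is entirely adequate for the role the result plays here, namely the separability of $C_0(\R_+)$ invoked in the proof of \cref{thm:Continuity_Thm_Signed} and the separating-family corollary in the appendix.
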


Next, we recall the concept of a \emph{separating family} of measurable maps for the space $\cM$.

\begin{definition}
A family $\cC$ of measurable maps $f : {\R_{+}} \to \R$ is called a \emph{separating family} for $\cM$ if, for any two measures $\mu, \nu \in \cM$, the equality $\int f \de \mu = \int f \de \nu$ for all $ f\in\cC\cap L^1(\mu)\cap L^1(\nu)$ implies that $\mu = \nu$.
\end{definition}

Denote by $\textnormal{Lip}_1({\R_{+}},[0,1])$ the space of all functions $f: \R_+ \to [0, 1]$ that are Lipschitz with constant $1$. The following result is an immediate consequence of \cite[Theorem 13.11]{klenke_probability_2014}.
\begin{theorem}\label{thm:lipschitzSeparation}
$C_c({\R_{+}}) \cap \textnormal{Lip}_1({\R_{+}},[0,1])$ is a separating family for $\cM$.
\end{theorem}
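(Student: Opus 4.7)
The plan is to reduce the claim to a uniform-density argument followed by the Riesz-Markov-Kakutani identification of $\cM$ with $(C_0(\R_+))^*$. Setting $\eta := \mu - \nu \in \cM$, it is enough to show that $\int f \, \de \eta = 0$ for every $f \in \cC := C_c(\R_+) \cap \textnormal{Lip}_1(\R_+,[0,1])$ forces $\eta = 0$.

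I would first enlarge $\cC$ to the vector space $\cL$ of all Lipschitz functions in $C_c(\R_+)$. Given a non-negative, non-zero $g \in \cL$, the rescaling $g/c$ with $c := \max(\norm{g}_\infty, \textnormal{Lip}(g))$ lies in $\cC$, so by homogeneity $\int g \, \de \eta = 0$. Writing a general $g \in \cL$ as $g = g^+ - g^-$, with both summands non-negative, compactly supported, and Lipschitz with the same constant as $g$, extends the vanishing to all of $\cL$ by linearity.

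I would then show $\cL$ is uniformly dense in $C_0(\R_+)$ via \Cref{thm:stoneWeierstrass}. That $\cL$ is a subalgebra of $C_0(\R_+)$ follows from the pointwise bound $|f(x)g(x) - f(y)g(y)| \leq (\norm{f}_\infty \textnormal{Lip}(g) + \norm{g}_\infty \textnormal{Lip}(f))|x-y|$ together with $\textnormal{supp}(fg) \subset \textnormal{supp}(f) \cap \textnormal{supp}(g)$. Tent functions of the form $x \mapsto (1 - |x - x_0|/\delta)_+$ belong to $\cL$ and, by choosing $x_0$ and $\delta$ appropriately, witness that $\cL$ vanishes nowhere and separates points, so the two hypotheses of \Cref{thm:stoneWeierstrass} are in place.

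Finally, the functional $g \mapsto \int g \, \de \eta$ is sup-norm continuous on $C_0(\R_+)$ with operator norm at most $\norm{\eta}$, so density of $\cL$ upgrades the vanishing from $\cL$ to all of $C_0(\R_+)$; Riesz-Markov-Kakutani then forces $\eta = 0$. The step I expect to need the most care is confirming that the compact-support constraint stays jointly compatible with the Lipschitz constraint under both positive-part decomposition and multiplication, since Stone-Weierstrass under such a side constraint can be delicate; once this is in hand, the remainder is a routine scaling / decomposition / density chain.
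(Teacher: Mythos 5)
Your proof is correct, but it takes a genuinely different route from the paper, which does not argue at all: it records the statement as an immediate consequence of \cite[Theorem 13.11]{klenke_probability_2014}, whose own proof approximates indicators of closed sets by the $1$-Lipschitz functions $x \mapsto (1 - d(x,A)/\e)^+$ and closes with a Dynkin-system argument. You instead give a self-contained functional-analytic argument: scale and split to pass from $\cC$ to the algebra $\cL$ of all compactly supported Lipschitz functions, invoke the de Branges form of Stone--Weierstrass (\cref{thm:stoneWeierstrass}) to get uniform density of $\cL$ in $C_0(\R_+)$, and finish via boundedness of $g \mapsto \int g \,\de\eta$ and the Riesz--Markov--Kakutani identification of $\cM$ with $(C_0(\R_+))^*$. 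The individual steps all check out: $g^{\pm}$ inherit the Lipschitz constant and support of $g$; $\cL$ is closed under products by your displayed bound; and the tent functions witness nowhere-vanishing and separation of points, including at the boundary point $0$. Your closing worry about Stone--Weierstrass ``under a side constraint'' is moot, since the Lipschitz constants in $\cL$ are unconstrained, so $\cL$ is an honest subalgebra and no uniformity is required --- the constants only matter in the initial rescaling step, which you handle correctly. As for what each route buys: your argument essentially reproves \cref{corollary:separatingClass} along the way (for the particular algebra $\cL$), so within the paper's architecture it makes \cref{thm:lipschitzSeparation} redundant rather than foundational; on the other hand it leans on the injectivity of $\cM \to (C_0(\R_+))^*$, i.e.\ on the regularity built into the paper's definition of Radon measure, which the citation route supplies for free. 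A small simplification worth noting: once you know $\int g\,\de\eta = 0$ for all $g \in \cL$, you can skip both Stone--Weierstrass and Riesz--Markov--Kakutani by taking $g_k(t) := \min\bigl(1, k(x + k^{-1} - t)^+\bigr) \in \cL$, letting $k \to \infty$ with dominated convergence (legitimate since $\norm{\eta} < \infty$) to get $\eta([0,x]) = 0$ for every $x$, and concluding $\eta = 0$ from the uniqueness theorem for finite measures on the generating $\pi$-system $\{[0,x] : x \geq 0\}$ --- which is, in effect, the Klenke proof the paper cites.
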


Theorem \eqref{thm:lipschitzSeparation} together with Theorem \ref{thm:stoneWeierstrass} and the observation that a dense subset (for the topology of uniform convergence) of a separating family is again a separating family yields the following corollary.

\begin{corollary}\label{corollary:separatingClass}
Let $\cC$ be a subalgebra of $C_0({\R_{+}})$ that separates points and vanishes nowhere. Then $\cC$ is a separating family for $\cM$.
\end{corollary}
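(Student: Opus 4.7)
The plan is to combine Theorems \ref{thm:stoneWeierstrass} and \ref{thm:lipschitzSeparation} via a uniform approximation argument, exploiting the fact that measures in $\cM$ are finite. First I would observe that every $f \in \cC \subset C_0(\R_{+})$ is continuous and vanishes at infinity, hence bounded; since every $\mu \in \cM$ satisfies $\norm{\mu} < \infty$, we have $f \in L^1(\mu) \cap L^1(\nu)$ for any $\mu, \nu \in \cM$. Consequently $\cC \cap L^1(\mu) \cap L^1(\nu) = \cC$, and the separating condition reduces to showing that $\int f \de \mu = \int f \de \nu$ for every $f \in \cC$ forces $\mu = \nu$.

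Next, I would promote this integral equality from $\cC$ to all of $C_0(\R_{+})$. By the Stone-Weierstrass Theorem (\ref{thm:stoneWeierstrass}), the hypotheses on $\cC$ ensure that it is uniformly dense in $C_0(\R_{+})$. For any $g \in C_0(\R_{+})$ I pick $\{f_n\} \subset \cC$ with $\|f_n - g\|_\infty \to 0$, and then the elementary estimate
\begin{equation*}
\left| \int f_n \de \mu - \int g \de \mu \right| \leq \|f_n - g\|_\infty \norm{\mu},
\end{equation*}
together with its counterpart for $\nu$, allows me to pass to the limit and conclude that $\int g \de \mu = \int g \de \nu$ for every $g \in C_0(\R_{+})$.

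Finally, since $C_c(\R_{+}) \cap \textnormal{Lip}_1(\R_{+},[0,1]) \subset C_0(\R_{+})$, the equality above holds in particular for every $g$ in this Lipschitz separating family, and Theorem \ref{thm:lipschitzSeparation} then yields $\mu = \nu$. There is no real obstacle here—the argument is a routine concatenation of the two stated theorems. The only subtle point worth flagging is that the approximation step crucially uses $\norm{\mu}, \norm{\nu} < \infty$, which is precisely why the corollary is stated for $\cM$ and not for the larger class $\cM^{*}$ of generalised signed Radon measures.
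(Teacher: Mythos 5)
Your proposal is correct and follows essentially the same route as the paper, which combines Theorem \ref{thm:stoneWeierstrass} with Theorem \ref{thm:lipschitzSeparation} via the observation that uniform density plus finiteness of the measures lets one pass the integral identity from $\cC$ to the Lipschitz separating family. Your write-up simply makes explicit the approximation estimate and the integrability point that the paper leaves implicit.
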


With the help of Corollary \eqref{corollary:separatingClass}, we can now state and prove the characterisation of $\cM_{\Psi}$ via Laplace transforms.

\begin{proposition}[Characterisation via Laplace Transforms]\label{thm:signedCharacterisation}
    Any $\mu \in \cM_{\Psi}$ is uniquely determined by its Laplace transform $\Psi_\mu$.
\end{proposition}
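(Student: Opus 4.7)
The plan is to reduce uniqueness in $\cM_{\Psi}$ to the separating property of a well-chosen subalgebra of $C_0(\R_{+})$, using the same exponential tilting trick already employed in the proof of \cref{thm:Continuity_Thm_Signed}. For each $\e > 0$, define the tilted measure $\mu^{(\e)}(\den x) := e^{-\e x}\mu(\den x)$; the hypothesis $\mu \in \cM_{\Psi}$ guarantees that $\mu^{(\e)} \in \cM$ with $\norm{\mu^{(\e)}} = \Psi_{|\mu|}(\e) < \infty$, and $\Psi_{\mu^{(\e)}}(\la) = \Psi_{\mu}(\la + \e)$ for every $\la > 0$.

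The core of the proof is to show that the linear span $\cC$ of $\{ e^{-\la \, \cdot} : \la > 0 \}$ is a separating family for $\cM$. Indeed, $\cC$ is a subalgebra of $C_0(\R_{+})$, because $e^{-\la x} \, e^{-\la' x} = e^{-(\la + \la')x}$ is of the same form and each $e^{-\la \, \cdot}$ vanishes at infinity. Since $x \mapsto e^{-x}$ is strictly decreasing and strictly positive on $\R_+$, the algebra $\cC$ separates points and vanishes nowhere. Hence, by \cref{corollary:separatingClass}, $\cC$ is a separating family for $\cM$.

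Assume now that $\mu, \nu \in \cM_{\Psi}$ satisfy $\Psi_\mu = \Psi_\nu$. Then for every $\e > 0$ the identity $\Psi_{\mu^{(\e)}}(\la) = \Psi_{\nu^{(\e)}}(\la)$ holds for all $\la > 0$, and hence $\int f \de \mu^{(\e)} = \int f \de \nu^{(\e)}$ for all $f \in \cC$ by linearity. Since $\cC$ consists of bounded continuous functions and $\mu^{(\e)}, \nu^{(\e)}$ are finite, $\cC \subset L^1(\mu^{(\e)}) \cap L^1(\nu^{(\e)})$, so the separating property gives $\mu^{(\e)} = \nu^{(\e)}$. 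To recover $\mu = \nu$ on $\sB(\R_{+})$, I would fix $T > 0$ and apply dominated convergence as $\e \downarrow 0$ on $[0, T]$, using $e^{-\e x} \leq 1$ and $|\mu|([0,T]) + |\nu|([0,T]) < \infty$ (by local finiteness), to obtain $\mu(A) = \nu(A)$ for every bounded Borel $A$; the extension to arbitrary $A \in \sB(\R_+)$ follows by considering $A \cap [0, n]$ together with the Jordan decomposition and monotone convergence for $\mu^\pm, \nu^\pm$. The only delicate point is the Stone--Weierstrass verification for $\cC$, but this reduces to the trivial observation that distinct positive exponential rates distinguish all points of $\R_+$ and never vanish there, so the whole proof is a clean packaging of the preceding functional-analytic machinery.
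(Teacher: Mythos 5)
Your proposal is correct and follows essentially the same route as the paper: exponential tilting to reduce to finite measures, then the Stone--Weierstrass/separating-family argument via \cref{corollary:separatingClass} applied to the span of $\{e^{-\la\,\cdot}:\la>0\}$. The only cosmetic difference is your final step of letting $\e \downarrow 0$ with dominated convergence, whereas the paper simply notes that $\mu(\den x) = e^{\e x}\mu^{(\e)}(\den x)$ recovers $\mu$ from $\mu^{(\e)}$ for a single fixed $\e$; both are valid.
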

\begin{proof}
First assume that $\mu \in \cM$. Let $\cC' := \{e^{-\la x}:\la > 0\}$ and $\cC$ be the set of finite linear combinations of elements in $\cC'$. Then $\cC$ is a sub-algebra of $C_0({\R_{+}})$ that separates points and vanishes nowhere. Thus, $\cC$ is a separating family for $\cM$ by \cref{corollary:separatingClass}. Since the elements of $\cC'$ correspond to $\Psi_\mu$ for different values of $\lambda > 0$, the result follows.

Next, let $\mu \in \cM_{\Psi}$. Fix $\e >0$ and define $\mu^{(\e)} \in \cM(\R_{+})$ via $\mu^{(\e)}(\den x) := e^{-\e x} \mu(\den x) $. Then for $\la > 0$,
    \begin{equation}\label{eq:lapChar}
        \Psi_{\mu}(\la +\e) = \Psi_{\mu^{(\e)}}(\la).
    \end{equation}
Since $\mu^{(\e)} \in \cM$, it is uniquely determined by $\Psi_{\mu^{(\e)}}$. The latter is in turn uniquely determined by $\Psi_\mu$ by \eqref{eq:lapChar}. Since $\mu$ is  uniquely determined by  $\mu^{(\e)}$ and $\e$, the result follows.  
\end{proof}



\bibliographystyle{plain} 
\bibliography{Tauberian}

\begin{thebibliography}{10}

\bibitem{bingham_regular_1989}
N.~H. Bingham, C.~M. Goldie, and J.~L. Teugels.
\newblock {\em Regular {Variation}}, volume~27 of {\em Encyclopedia of
  {Mathematics} and its {Applications}}.
\newblock Cambridge University Press, Cambridge, 1989.

\bibitem{de_branges_stone-weierstrass_1959}
Louis de~Branges.
\newblock The {Stone}-{Weierstrass} theorem.
\newblock {\em Proc. Amer. Math. Soc.}, 10(5):822--824, 1959.

\bibitem{feller_classical_1963}
W.~Feller.
\newblock On the classical {Tauberian} theorems.
\newblock {\em Arch. Math}, 14(1):317--322, 1963.

\bibitem{feller_introduction_1971}
W.~Feller.
\newblock {\em An introduction to probability theory and its applications.
  {Vol}. {II}.}
\newblock John Wiley \& Sons, Inc., New York-London-Sydney, second edition,
  1971.

\bibitem{hardy_tauberian_1914}
G.~H. Hardy and J.~E. Littlewood.
\newblock Tauberian {Theorems} {Concerning} {Power} {Series} and {Dirichlet}'s
  {Series} whose {Coefficients} are {Positive}*.
\newblock {\em Proc. Lond. Math. Soc. (2)}, 13(1):174--191, 1914.

\bibitem{herdegen_vague_2022}
M.~Herdegen, G.~Liang, and O.~Shelley.
\newblock Vague and weak convergence of signed measures, 2022.
\newblock Preprint arXiv:2205.13207.

\bibitem{karamata_sur_1930}
J.~Karamata.
\newblock {\em Sur un mode de croissance régulière des fonctions}.
\newblock Inst. de arte graf. Ardealul, 1930.

\bibitem{klenke_probability_2014}
A.~Klenke.
\newblock {\em Probability {Theory}: {A} {Comprehensive} {Course}}.
\newblock Universitext. Springer, London, second edition, 2014.

\bibitem{konig_neuer_1960}
H.~König.
\newblock Neuer {Beweis} eines klassischen {Tauber}-{Satzes}.
\newblock {\em Arch. Math.}, 11(1):278--279, 1960.

\bibitem{leoni_first_2017}
G.~Leoni.
\newblock {\em A first course in {Sobolev} spaces}, volume 181 of {\em Graduate
  studies in mathematics}.
\newblock American Mathematical Society, Providence, R.I, second edition, 2017.

\end{thebibliography}

\end{document}